\newcommand{\R}{\mathbb{R}}
\newcommand{\F}{\mathcal F}
\renewcommand{\vec}[1]{\mathbf{#1}}
\newtheorem{theorem}{Theorem}[section]
\newtheorem{lemma}[theorem]{Lemma}
\newtheorem{corollary}[theorem]{Corollary}
\newtheorem{proposition}[theorem]{Proposition}
\begin{document}
\title{On the hollow enclosed by convex sets}
%\title{\textbf{\MakeUppercase{On the hollow enclosed by convex sets}}}

\author{
{Jen\H{o} Lehel}\\
{\small University of Louisville, and}\\
{\small Alfr\'ed R\'enyi Institute of Mathematics}\footnote{Since September 1, 2019 
the Alfr\'ed R\'enyi Institute of Mathematics  
does not belong to the Hungarian Academy of Sciences.}\\
{\small \texttt{lehel@louisville.edu}}\\
and\\
{G\'eza T\'oth}\thanks{Supported by the National 
Research, Development and Innovation Fund (TUDFO/51757/2019-ITM, 
Thematic Excellence Program) and 
National Research, Development and Innovation Office, NKFIH KKP-133864,
K-13152.}\\
{\small Alfr\'ed R\'enyi Institute of Mathematics, and}\\
{\small Budapest University of Technology and Economics, SZIT}\\
{\small \texttt{geza@renyi.hu}}}

\maketitle 
\abstract {\it For $n\leq d$, a family $\F=\{C_0,C_1,\ldots, C_n\}$ of compact convex sets in $\R^d$ is called an $n$-critical family provided any $n$ members of $\F$ have a non-empty intersection, but $\bigcap_{i=0}^n C_i=\varnothing$.    
If  $n=d$  then a lemma on the intersection of convex sets due to Klee implies that the $d+1$ members of the $d$-critical family  enclose a `hollow'  in $\R^d$, a bounded connected component of $\R^d\setminus\bigcup_{i=0}^n C_i.$   Here we prove that the closure of the convex hull of a hollow in $\R^d$ is a $d$-simplex.\footnote{Keywords: convex sets, critical family, intersection theorems, Klee's separation theorem, KKM lemma} }\\

\noindent Besides the Helly-theorem on intervals 
 in $\R^1$ 
a less notable property is that two disjoint intervals can be separated by  a point, in other words, there is a `hollow' (an interval) between them, a gap, which cannot be bridged with two intervals having empty intersection. 
This separation or gap property, trivial as it is, helps characterize the intersection patterns of convex sets in $\R^1$ in terms of `interval graphs'. Actually, the gap property implies the foremost necessary condition that an interval graph must be chordal, namely, each cycle of length more than three has a chord (see \cite{chordal}). 
Just as Helly's theorem is established in $\R^d$, for every $d\geq 1$,  the separation or gap property has extensions to 
higher dimension. \\ 

\noindent  A family  of compact convex sets $C_0,C_1,\ldots, C_n\subset \R^d$ is called here an {\it $n$-critical} family
 if $\bigcap_{i\neq j} C_i\neq\varnothing$, for every $j=0,1,\ldots,n$, but $\bigcap_{i=0}^n C_i=\varnothing$.   
The denotation `critical'\footnote{The concept of criticality was introduced  in graph theory by T. Gallai \cite{Gallai}} becomes clear when in some finite 
family of sets with empty intersection we consider a `smallest' subset with the same property, a `critical subfamily'.

 Convexity and compactness in the definition of a critical family was chosen here with combinatorial geometry applications in mind (see \cite{JKLPT}). However,  in intersection or covering theorems of topology, when a finite or infinite family of sets appears, the compactness requirement of the members might be relaxed (see \cite{lasso}), and the condition $\bigcap_{i=0}^n C_i=\varnothing$ is usually replaced with its  contrapositive  that $\bigcup_{i=0}^n C_i$ is a convex set, which denies the hollow (see \cite{Klee}). Meanwhile, the primary condition that $\bigcap_{i\neq j} C_i\neq\varnothing$, for every $j=0,1,\ldots,n$, is unchanged  and  displays a topology variation of $n$-criticality in the different contexts. 
  
The role of $n$-critical families (or its variations) in Euclidean spaces was recognized by Klee \cite{Klee,Klee68}, Berge \cite{berge}, and Ghouila-Houri \cite{ghouila} in the study of intersection properties of convex sets.  
These properties are closely related to
fixed point theorems and minimax theorems  as explored by Fan \cite{Fan52}. As a result,  the intersection theorems and their applications were 
extended further in functional analysis and in topology by  Balaj \cite{Bal}, Ben-El-Mechaiekh \cite{Ben},  Fan \cite{Fan52,Fan84},  Horvath \cite{horvath} and others, by replacing the Euclidean space with general  topological vector spaces. All these investigations are originated in classical 
topology results such as the Sperner's lemma \cite{Sp}, and its generalizations starting with the Knaster, Mazurkievicz, Kuratowski-theorem \cite{KKM,lasso}. 

Observe that by Helly's theorem \cite{helly}, there is no $n$-critical family in $\R^d$ provided $n>d$.  A fundamental lemma due to 
Klee \cite{Klee} 
 implies that for $n=d$ there is a bounded  domain $D\subseteq\R^d\setminus \bigcup_{i=0}^d C_i$ 
 called here the {\it hollow} enclosed by the $d$-critical family in $\R^d$
  (Corollary \ref{hollow}).
 Section \ref{kleelemma} contains different proofs of Klee's fundamental  covering  lemma displaying its many faceted connections to combinatorial topology.  
In Section \ref{main} it is proved  that the closure of the convex hull of a hollow in $\R^d$ is a $d$-simplex (Theorem \ref{dencage}). An immediate corollary of  the hollow theorem, 
  related to an early result of Ghouila-Houri \cite{ghouila}, is formulated in
  Section \ref{applications} (Theorem \ref{cagethm}).  The note concludes with a separation property  of  $n$-critical families in $\R^d$, actually a corollary of a more general separation result by Klee \cite[Theorem 1]{Klee}, for the case $n<d$, when there is no hollow enclosed by the family (Theorem \ref{separ}).\\

\noindent Given a set $X\subset \R^d$,   the convex hull, the closure, and the boundary of $X$ is denoted by  Conv$(X)$, cl$(X)$, and $\partial X$, respectively.

\section{Klee's lemma}%%%%%%%%%%%%%%
\label{kleelemma}
 A basic lemma discovered by Klee \cite{Klee} and independently by  Berge \cite{berge}
captures a fundamental intersection property of $n$-critical families. We include here three proofs using different techniques and displaying a many faceted connections of the lemma to topology. The first purely geometry proof is  using the standard separation theorem of disjoint compact convex sets (c.f. \cite{Klee68}). The second proof  
was outlined by Berge \cite{berge} and applies a combinatorial topology result deduced from Sperner's  lemma  \cite{Sp}. The last proof uses the KKM lemma from fixed-point theory due to Knaster, Kuratowski, and Mazurkievicz \cite{KKM}.

 \begin{lemma} {\rm [Klee \cite{Klee}, Berge \cite{berge}]}.
 \label{lemma}
Let $C_0,C_1,\ldots, C_n\subset\R^d$ be compact convex sets such that 
$\bigcap\limits_{\substack {i=1\\ i\neq j}}^n C_i\neq\varnothing$, for every $j=0,1,\ldots,n$. If $\bigcup_{i=0}^n C_i$ is convex, then $\bigcap_{i=0}^n C_i\neq\varnothing $.
\end{lemma}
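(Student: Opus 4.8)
The plan is to argue by contradiction, assuming $\bigcap_{i=0}^n C_i=\varnothing$ while every proper $n$-fold sub-intersection is nonempty and $U:=\bigcup_{i=0}^n C_i$ is convex. For each $j$ pick a point $p_j\in\bigcap_{i\neq j}C_i$; note $p_j\notin C_j$ since otherwise $p_j$ would lie in the total intersection. The key auxiliary object is the simplex $S=\mathrm{Conv}(p_0,p_1,\ldots,p_n)$, whose vertices lie in $U$, hence $S\subseteq U$ by convexity of $U$. I would then set up a covering of $S$ by the closed sets $D_j:=C_j\cap S$, $j=0,\ldots,n$, and seek the contradiction via a Sperner-type / KKM-type argument.

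First I would verify the crucial incidence pattern on the vertices: $p_k\in D_j$ exactly when $k\neq j$, so each vertex $p_k$ of $S$ is covered by all the $D_j$ except $D_k$. More generally, for any face $F=\mathrm{Conv}(p_k:k\in I)$ of $S$, every point of $F$ lies in $\bigcap_{j\notin I}C_j$ (since each $p_k$ with $k\in I$ lies in every $C_j$ with $j\neq k$, in particular with $j\notin I$, and $C_j$ is convex), so $F\subseteq\bigcap_{j\notin I}D_j$; in particular $F$ is covered by the subfamily $\{D_j:j\in I\}$. Thus $\{D_j\}_{j=0}^n$ is a closed cover of $S$ satisfying exactly the KKM boundary condition (each face of the simplex $S$, spanned by vertices indexed by $I$, is covered by the sets indexed by $I$). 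The KKM lemma then forces $\bigcap_{j=0}^n D_j\neq\varnothing$, hence $\bigcap_{j=0}^n C_j\neq\varnothing$, contradicting the hypothesis. (For the Sperner route one would instead color a fine triangulation of $S$, assigning to each sub-vertex $v$ an index $j$ with $v\in D_j$ that is admissible by the face condition, get a rainbow simplex, and shrink the mesh to extract a common point — but invoking KKM directly is cleanest.)

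I expect the main obstacle to be a subtle but essential point: the sets $D_j$ must genuinely cover all of $S$, not merely its skeleton. This is exactly where the convexity of $U=\bigcup C_i$ is used in an indispensable way — without it, an interior point of $S$ might escape every $C_j$, the cover would fail, and indeed the conclusion would be false (that failure is precisely the ``hollow''). So the logical heart of the proof is the implication $S\subseteq U$, i.e.\ that a convex set containing all the $p_j$ contains their simplex, combined with $U=\bigcup_j C_j$, giving $S=\bigcup_j(C_j\cap S)=\bigcup_j D_j$. Once this is in place, together with the face/boundary condition derived above, the topological input (KKM, equivalently Sperner) finishes the argument immediately. A minor technical point to dispatch is the degenerate case where the $p_j$ are affinely dependent so $S$ is a lower-dimensional simplex; the KKM lemma applies verbatim in the appropriate affine subspace, so no separate treatment is needed, though one could also perturb the $p_j$ slightly within $U$ if a full-dimensional simplex is desired.

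Regarding the purely geometric route alluded to in the text (via the separation theorem for disjoint compact convex sets), the alternative plan would be: if $\bigcap C_i=\varnothing$, then some minimal subfamily still has empty intersection; using separation one peels off one set whose complement-half-space must contain the intersection of the rest, and then one shows the convexity of $U$ is violated by exhibiting two points of $U$ whose segment leaves $U$ — but I would regard the KKM argument as the conceptually transparent one and present that, reserving the separation-based and Sperner-based versions as the promised alternative proofs.
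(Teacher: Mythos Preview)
Your approach is essentially the paper's third (KKM) proof, and your identification of the key geometric point --- that convexity of $U$ is exactly what guarantees $S\subseteq\bigcup_j D_j$ --- is right. But the KKM verification contains a real error. You correctly derive that the face $F=\mathrm{Conv}(p_k:k\in I)$ satisfies $F\subseteq\bigcap_{j\notin I}D_j$, and then assert ``in particular $F$ is covered by the subfamily $\{D_j:j\in I\}$.'' That inference is false: take $I=\{k\}$, so $F=\{p_k\}$; you yourself noted $p_k\notin C_k$, hence $p_k\notin D_k$, so $\{p_k\}$ is \emph{not} covered by $\{D_k\}$. The assignment $p_k\mapsto D_k$ is \emph{not} a KKM map.

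What you have actually established is the \emph{dual} condition: each face lies in every set indexed by the complement of its vertex set. This does suffice, but one must either invoke the dual form of KKM explicitly, or --- and this is the device the paper uses --- reindex via a cyclic shift: set $\Gamma(p_i)=C_{i-1}$ (indices mod $n{+}1$). For any proper $I\subsetneq\{0,\ldots,n\}$ there is some $j\in I$ with $j-1\notin I$, and your inclusion then gives $F\subseteq C_{j-1}=\Gamma(p_j)$ with $j\in I$, which is now the genuine KKM boundary condition. The same shift is what makes the paper's Sperner coloring admissible (``color $j$ if $v\in C_{j-1}\setminus C_j$''), so your parenthetical Sperner route inherits the same gap. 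With the reindexing inserted, the rest of your argument goes through.
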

\begin{proof}%\noindent {\it First proof}.  % (Klee \cite{Klee}). 
The proof is induction on $n$. The case $n=0$ is trivial; assume that $n\geq 1$ and 
the claim is true for $n$ convex sets. If $\bigcap_{i=0}^n C_i=\varnothing$, then  $C_n$ and $A=\bigcap_{i=0}^{n-1} C_i$ are disjoint compact convex sets, thus they 
can be strictly separated with a hyperplane $H$ such that $H\cap A=H\cap C_n=\varnothing$.
Let $C_i^\prime=H\cap C_i$, $0\leq i\leq n-1$.  

For every $j=0,\ldots,n-1$, the condition 
$\bigcap\limits_{\substack {i=1\\ i\neq j}}^n C_i= 
C_n\cap\left(\bigcap\limits_{\substack {i=1\\ i\neq j}}^{n-1} C_i\right)\neq\varnothing$ combined with  $H\cap C_n=\varnothing$ 
imply that $H\cap\left(\bigcap\limits_{\substack {i=1\\ i\neq j}}^{n-1} C_i\right)=\bigcap\limits_{\substack {i=1\\ i\neq j}}^{n-1} C_i^\prime\neq\varnothing$. Because 
$\bigcup_{i=0}^{n-1} C_i^\prime=(H\cap C_n)\cup\left(\bigcup_{i=0}^{n-1} H\cap C_i\right)=H\cap\left(\bigcup_{i=0}^{n} C_i\right)
$ 
is convex, we obtain 
by induction that $\bigcap\limits_{i=0}^{n -1}C_i^\prime=H\cap\left(\bigcap_{i=0}^{n -1}C_i\right)=H\cap A\neq\varnothing$, a contradiction. 
\end{proof}

\noindent {\it Second proof of Lemma \ref{lemma}}.
Let $a_j\in \bigcap_{i\neq j} C_i$,  for $j=0,1,\ldots,n$, and set 
$S=$ Conv$(\{a_0,\ldots, a_n\})$ for the convex hull of these $n+1$ points. If $S$ is not a simplex, then they span an affine  subspace of dimension $n-1$ or less, then by Helly's theorem the claim  $\bigcap_{i=0}^n C_i\neq\varnothing$ follows. We assume now that $S$ is an $n$-simplex. Since the facet $S^{(j)}\subset S$ opposite $a_j$ is included in $C_j$ and $\bigcup_{i=0}^n C_i$ is convex, we have $S\subseteq\bigcup_{i=0}^n C_i$. 

We take a simplicial subdivision of $S$ with arbitrary small mesh\footnote{\;mesh = the maximum diameter of the simplices of the subdivision}. 
A Sperner coloring\footnote{\;a vertex $v_i$ of the $n$-simplex $(v_0,\ldots,v_n)$ is  colored with $i$, $i=0,1,\ldots,n$, furthermore;\\ \indent\quad if $v\in$ Conv$(\{v_{i_0},v_{i_1},\ldots,v_{i_k}\})$  then the color of $v$ is any index from  $\{{i_0},{i_1},\ldots,{i_k}\}$} 
of the vertices of the subdivision is defined next. 
For a vertex $v$ of the subdivision let the color of $v$ be any index 
 $j\in\{{0},{1},\ldots,{n}\}$  such that $v\in C_{j-1}\setminus C_j$ (where $C_{-1}=C_n$). A color  $j$ exists for every $v\in S$, since otherwise, $v\in \bigcap_{i=0}^n C_i$, and the claim follows. Observe, if $j$ is the color of $v\in$ Conv$(\{a_{i_0},a_{i_1},\ldots,a_{i_k}\})$, then $j\in \{{i_0},{i_1},\ldots,{i_k}\}$  follows  by the convexity of $C_j$, and because  $v\notin C_j$.
  Then by Sperner's lemma, there is an $n$-simplex  
whose vertices are multicolored with $n+1$  different colors. 

By repeating the procedure with simplicial sudivisions of $S$ with mesh $\epsilon\searrow 0$, there is a convergent subsequence of the multicolored subdividing simplices approaching a point
$p\in S$. This limit point satisfies $p\in C_{j-1}$, for every $j=0,1,\ldots,n$, thus $\bigcap_{i=0}^n C_i\neq\varnothing$ follows.
\qed\\

%\subsection*{}
% \label{extensions} %%%%%%%%%%%%%%%%%%

The KKM lemma due to Knaster, Kuratowski, and Mazurkievicz \cite{KKM} is known as a remarkable intersection theorem for closed covers of a Euclidean simplex. 
Extending the Sperner lemma \cite{Sp}  
the KKM lemma was the starting point of further generalizations to topological vector spaces \cite{Ben,horvath,lasso}; these variations have been applied in mathematical fixed-point theory \cite{Fan84}.

A set-valued map $\Gamma$ of the points of an arbitrary  set $X\subset \R^d$ into sets of $\R^d$  is called a {\it KKM map on $X$} if for every finite subset
$N\subseteq X$, Conv$(N)\subseteq \bigcup_{x\in N} \Gamma(x)$. Ben-El-Mechaiekh \cite{Ben} proves a particular version of the KKM theorem stated as follows. 
\begin{theorem}
 \label{kkm}
If $\Gamma$ is a KKM map on $X\subset \R^d$ such that,  for every $x\in X$, $\Gamma(x)$
 is a non-empty closed convex subset of $\R^d$, then  
the family $\F=\{\Gamma(x)\}_{x\in X}$ has the finite intersection property, that is the intersection of the members of any finite subfamily of $\F$ is nonempty.\qed
\end{theorem}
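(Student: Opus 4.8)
\medskip
\noindent\emph{A proof of Theorem \ref{kkm} (sketch).}

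The plan is to reduce the assertion to the classical closed KKM lemma on a single Euclidean simplex, a result that itself follows from Sperner's lemma \cite{Sp} by the limiting argument already used in the second proof of Lemma \ref{lemma}. Since the finite intersection property concerns arbitrary finite subfamilies, it is enough to fix a finite set $N=\{x_0,x_1,\ldots,x_m\}\subseteq X$ and prove $\bigcap_{i=0}^m\Gamma(x_i)\neq\varnothing$. One cannot apply the KKM lemma to $\mathrm{Conv}(N)$ directly, because the points $x_i$ may be affinely dependent and the sets $\Gamma(x_i)$ need not be contained in $\mathrm{Conv}(N)$; I would therefore pass to an abstract model simplex.

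Concretely, I would take the standard $m$-simplex $\Delta=\mathrm{Conv}(\{e_0,\ldots,e_m\})$ together with the unique affine (hence continuous) surjection $\phi\colon\Delta\to\mathrm{Conv}(N)$ with $\phi(e_i)=x_i$ for each $i$, and set $F_i=\phi^{-1}(\Gamma(x_i))$ for $i=0,1,\ldots,m$. Each $F_i$ is a closed subset of $\Delta$, since $\Gamma(x_i)$ is closed and $\phi$ is continuous.

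Next I would verify that $F_0,\ldots,F_m$ satisfy the KKM covering hypothesis on $\Delta$: for every $I\subseteq\{0,1,\ldots,m\}$ one has $\mathrm{Conv}(\{e_i:i\in I\})\subseteq\bigcup_{i\in I} F_i$. Indeed, if $y\in\mathrm{Conv}(\{e_i:i\in I\})$ then $\phi(y)\in\mathrm{Conv}(\{x_i:i\in I\})$, so the KKM-map property of $\Gamma$ applied to the finite set $\{x_i:i\in I\}$ gives $\phi(y)\in\Gamma(x_j)$ for some $j\in I$, i.e.\ $y\in F_j$. The closed KKM lemma \cite{KKM} then yields a point $y^\ast\in\bigcap_{i=0}^m F_i$, and consequently $\phi(y^\ast)\in\Gamma(x_i)$ for every $i$, so $\phi(y^\ast)\in\bigcap_{i=0}^m\Gamma(x_i)\neq\varnothing$. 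Note that this implication uses neither the non-emptiness of the sets $\Gamma(x)$ (which is automatic, by applying the KKM condition to the singleton $\{x\}$) nor their convexity; the latter becomes relevant only when the finite intersection property is subsequently combined with Helly's theorem \cite{helly}.

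The only genuine difficulty is the bookkeeping in this reduction — handling possible affine dependence among the $x_i$ and the fact that the $\Gamma(x_i)$ may protrude from $\mathrm{Conv}(N)$ — and both issues disappear once $\mathrm{Conv}(N)$ is replaced by the abstract simplex $\Delta$ and everything is pulled back along $\phi$; after that the argument is a formal application of the standard KKM lemma.
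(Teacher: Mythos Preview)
The paper does not actually prove Theorem \ref{kkm}: it is quoted from Ben-El-Mechaiekh \cite{Ben} and closed with a \qed, so there is no in-paper argument to compare against. Your sketch is correct and is the standard reduction. Pulling back along the barycentric affine map $\phi\colon\Delta\to\mathrm{Conv}(N)$ is precisely the right device to handle possible affine dependence among the $x_i$ and the fact that the $\Gamma(x_i)$ need not lie inside $\mathrm{Conv}(N)$; after that, the classical closed KKM lemma on $\Delta$ applies verbatim. Your side remark is also accurate: the argument uses only closedness of the $\Gamma(x)$, not their convexity, and non-emptiness is already forced by the KKM condition on singletons. The convexity hypothesis in the statement is there for the downstream use (combining the finite intersection property with Helly's theorem, and for the third proof of Lemma \ref{lemma}), not for the proof of the theorem itself.
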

For finite sets $X$ the claim in Theorem \ref{kkm} simply becomes 
$\bigcap_{x\in X} \Gamma(x)\neq\varnothing$.  
As observed by Ben-El-Mechaiekh \cite{Ben}, Klee's fundamental intersection theorem (Lemma \ref{lemma}) follows from
the finite version  of Theorem \ref{kkm}.\\

\noindent {\it Third proof of Lemma \ref{lemma}}.
% We are given compact convex sets $C_0,C_1,\ldots, C_n\subset\R^d$ 
Let   $a_j\in \bigcap_{i\neq j} C_i$,  for $j=0,1,\ldots,n$.
%, furthermore, we know that  $C=\bigcup_{i=0}^n C_i$ is convex.
 Define the map $\Gamma(a_i)\mapsto C_{i-1}$, for $i=0,1,\dots,n$,  (where $ C_{-1}=C_n$). We verify that $\Gamma$ is a KKM map on
 $A=\{a_0,a_1,\ldots,a_n\}$; 
  let $N\subseteq A$.

For $N=A$,  because $A\subset \bigcup_{i=0}^n C_i$ and $C=\bigcup_{i=0}^n C_i$ is convex, we obtain
 Conv$(N)=$ Conv$(A)\subset C=\bigcup_{a_i\in N} \Gamma(a_i)$.
For $N\neq A$, let $j$ be an index such that $a_j\in N$, and $a_{j-1}\notin N$. Observe that $N\subset C_{j-1}$, and since $C_{j-1}$ is convex, 
 we obtain Conv$(N)\subset C_{j-1}=\Gamma(a_j)\subset \bigcup_{a_i\in N} \Gamma(a_i)$. 
 By Theorem \ref{kkm}, $ \bigcap_{a_i\in A} \Gamma(a_i)=\bigcap_{i=0}^n C_i\neq\varnothing$ follows.  \qed\\

\begin{corollary}
\label{hollow}
If $\{C_0,C_1,\ldots, C_d\}$ is a $d$-critical family in $\R^d$, then $\R^d\setminus \bigcup_{i=0}^d C_i$ has  a bounded connected component
$D$, that is every ray emanating from any point of $D$ intersects some $C_i$, $0\leq i\leq d$.
\end{corollary}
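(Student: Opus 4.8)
The plan is to deduce Corollary \ref{hollow} directly from Lemma \ref{lemma} via its contrapositive. Since $\{C_0,\ldots,C_d\}$ is a $d$-critical family, we have $\bigcap_{i\neq j}C_i\neq\varnothing$ for every $j$, yet $\bigcap_{i=0}^d C_i=\varnothing$. Lemma \ref{lemma} then forces $C=\bigcup_{i=0}^d C_i$ to be \emph{non-convex}: if it were convex the hypotheses of the lemma would be met and the total intersection would be nonempty, a contradiction. So the first step is to record that $C$ is a compact but non-convex set.

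The second step extracts a bounded hole from non-convexity. Because $C$ is compact (a finite union of compact sets) there is a large closed ball $B$ with $C\subset \mathrm{int}\,B$, so $\R^d\setminus C$ has exactly one unbounded component, namely the one containing $\R^d\setminus B$. I would argue that $\R^d\setminus C$ has at least one \emph{other}, hence bounded, component: if $\R^d\setminus C$ were connected, then since $C$ is closed one could join any two points of $C$ by a path in $\R^d\setminus C$ avoiding $C$ except at endpoints and — more to the point — conclude that $C$ is convex. Concretely, pick $x,y\in C$ with the segment $[x,y]\not\subset C$ (possible since $C$ is not convex); a point $z\in(x,y)\setminus C$ lies in some component of $\R^d\setminus C$, and I claim that component is bounded. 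Indeed $[x,z]$ and $[z,y]$ each meet $C$ (they contain the endpoints $x,y\in C$), so the component of $z$ is ``trapped'' between the two convex sets' worth of structure; the cleanest way to see boundedness is to invoke Klee's separation framework, or simply to note that the union of the $C_i$ on either side of $z$ along the line through $x,y$ blocks escape. This gives a bounded connected component $D$ of $\R^d\setminus\bigcup_{i=0}^d C_i$.

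The third step is the ``ray'' reformulation, which is essentially a restatement: a connected component $D$ of the open set $\R^d\setminus C$ is bounded if and only if every ray from every point $p\in D$ meets $C$. One direction is immediate — an unbounded component would contain, or at least its closure would reach, points arbitrarily far out, and more carefully, if some ray from $p\in D$ avoided $C$ entirely then that ray lies in $\R^d\setminus C$, is connected, contains $p$, and is unbounded, so the component of $p$ would be unbounded. Conversely if $D$ is bounded, no ray from a point of $D$ can stay in $D$; and once such a ray leaves $D$ it must cross $\partial D\subseteq C$. Assembling these three steps yields the statement.

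The main obstacle is making the boundedness argument in step two genuinely rigorous rather than hand-wavy: ``non-convex compact set has a bounded complementary component'' is true but needs care, since a priori the segment hole $z$ could conceivably sit in the unbounded component (imagine a thin slit — but a slit has measure-zero complement-blocking, whereas here $C$ is a union of full-dimensional convex bodies, or could be lower-dimensional). The honest fix is to appeal to the structure theorem of Klee \cite{Klee} already invoked in the paper's introduction for this very corollary, or to the two-proof toolkit above (Sperner / KKM), which pin down where the hole lies; alternatively one shows that the ``Helly defect'' $\bigcap_{i\neq j}C_i\neq\varnothing$ for all $j$ provides points $a_j$ surrounding the missing intersection so that $\mathrm{Conv}(a_0,\ldots,a_d)\subseteq C$ (using convexity of $C$... which fails) — so in fact one must work with $\mathrm{Conv}(a_0,\ldots,a_d)$ which is a $d$-simplex whose interior point escaping all $C_i$ would contradict $d$-criticality only after the hollow theorem; hence the simplex's barycenter-type argument is the natural source of the bounded hole, and I expect the clean writeup to route boundedness through that simplex together with the facet-in-$C_i$ observation from the second proof of Lemma \ref{lemma}.
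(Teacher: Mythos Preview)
Your Step~2 contains a genuine gap: the claim ``$C=\bigcup_i C_i$ compact and non-convex $\Rightarrow$ $\R^d\setminus C$ has a bounded component'' is simply false. An L-shaped region in $\R^2$, or a closed arc of a circle, is compact and non-convex yet has connected (hence entirely unbounded) complement. Your own segment argument shows only that some $z\in (x,y)$ lies in $\R^d\setminus C$; it gives no reason why the component of $z$ should be bounded, and in the L-example it is not. You recognise this (``main obstacle''), but the paragraph that follows does not repair it --- it ends by merely \emph{predicting} that the simplex of the points $a_j$ is the right tool, without using it.

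The paper's proof is precisely that simplex argument, and the key observation you are missing is not that $\mathrm{Conv}(a_0,\ldots,a_d)\subseteq C$ (which, as you note, would need convexity of $C$ and fails) but that the \emph{boundary} $\partial S$ of the simplex $S=\mathrm{Conv}(a_0,\ldots,a_d)$ lies in $C$: the facet opposite $a_j$ has all its vertices in $C_j$, hence lies in $C_j$ by convexity. One then applies Lemma~\ref{lemma} (contrapositive) to the restricted family $C_i'=C_i\cap S$, which is still $d$-critical, to conclude $\bigcup_i C_i'\subsetneq S$; any point $p\in S\setminus\bigcup_i C_i$ is then enclosed by $\partial S\subset C$, so every ray from $p$ meets $C$ and the component of $p$ is bounded. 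This localisation to $S$ is exactly what converts non-convexity into a bounded hole and is the missing idea in your write-up.
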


\begin{proof} Let $a_j\in \bigcap_{i\neq j} C_i$,  for $j=0,1,\ldots,d$. If 
 $E\subset \R^d$ is the affine space of dimension less than $d$, then the contradiction  
 $\bigcap_{i=0}^n C_i\neq \varnothing$ is obtained by Helly's theorem.
 Let $S=$ Conv$(\{a_0,\ldots, a_n\})$ be the $d$-simplex; notice that  
 each face of $S$ is contained in  $\bigcup_{i=0}^n C_i$.  
The compact convex sets $C^\prime_i=C_i\cap S$,  $i=0,1,\ldots,n$, form
a $d$-critical family, thus  by Lemma \ref{lemma} $\bigcup_{i=0}^d C_i^\prime\subset S$ is not convex, which means that  $S$ does not cover
$\bigcup_{i=0}^d C_i$. 
Let $p\in S\setminus\bigcup_{i=0}^d C_i$.
Because $\partial S\subseteq \bigcup_{i=0}^d C_i$, every ray emanating from  $p$ intersects $C_j$, for some $0\leq j\leq d$. 
\end{proof}

\section{The Hollow theorem} %%%%%%%%%%%%%%%%%%
\label{main}
\begin{theorem} 
\label{dencage}
If ${\mathcal F}=\{C_0,\ldots,C_{d}\}$ is a $d-$critical family  in $\R^d$, then 
one of the connected components of $\R^d\setminus \bigcup_{i=0}^d C_i$ is a non-empty bounded region $D$, and the closure of {\em Conv$(D)$} is a $d$-simplex. 
\end{theorem}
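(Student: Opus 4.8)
The plan is to realise $K:=\mathrm{cl}(\mathrm{Conv}(D))$ as the intersection of exactly $d+1$ closed half-spaces whose bounding hyperplanes have the facet normals of a non-degenerate simplex; a bounded, $d$-dimensional set of this form is forced to be a $d$-simplex, and that finishes the proof.

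First I would fix the setup. As in Corollary~\ref{hollow} choose $a_j\in\bigcap_{i\ne j}C_i$; Helly's theorem makes the $a_j$ affinely independent, so $S=\mathrm{Conv}(\{a_0,\dots,a_d\})$ is a $d$-simplex with $\partial S\subseteq\bigcup_iC_i$ and with the facet $S^{(i)}$ opposite $a_i$ contained in $C_i$. Since $D$ is connected, meets $\mathrm{int}\,S$, and is disjoint from $\partial S$, we get $D\subseteq\mathrm{int}\,S$; hence $K\subseteq S$ is a $d$-dimensional compact convex body, and by the Krein--Milman and Milman theorems every extreme point of $K$ lies in $\mathrm{cl}(D)\setminus D=\partial D\subseteq\bigcup_iC_i$.

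Next, for each $i$ let $u_i$ be the outward unit normal of $S$ along the facet $S^{(i)}$, let $H_i$ be the supporting hyperplane of $K$ with outward normal $u_i$, let $\overline{H_i}$ be the closed half-space it bounds that contains $K$, and set $G_i:=H_i\cap K$, the face of $K$ extreme in the direction $u_i$. The first claim is that $G_i\subseteq C_i$: intuitively $C_i$ ``caps'' the hollow on the $u_i$-side, since starting from a point of $D$ and moving in the direction $u_i$ one stays in $\R^d\setminus\bigcup_kC_k$ (hence in $D$, as $D$ is a component) until one first meets some $C_k$, and a limiting argument, as the starting point tends to a $u_i$-maximal extreme point $x$ of $K$, should pin $x$ inside $C_i$, using that $C_i\supseteq S^{(i)}$ is the $u_i$-extreme facet of $S\supseteq D$ while $a_i$ is the $u_i$-minimal vertex of $S$. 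Granting this, put $S':=\bigcap_{i=0}^{d}\overline{H_i}\supseteq K$. Because $u_0,\dots,u_d$ positively span $\R^d$, $S'$ is bounded, and since it contains the $d$-dimensional set $K$ and is cut out by only $d+1$ half-spaces, it is a $d$-simplex whose facet on $H_i$ contains $G_i\subseteq C_i$.

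It remains to show $K=S'$; as $K\subseteq S'$ and both are full-dimensional, it is enough that each vertex $v'_i$ of $S'$ — the common point of the hyperplanes $H_j$ with $j\ne i$ — lies in $K$, equivalently that the $d$ faces $G_j=H_j\cap K$ with $j\ne i$ have a common point. This is exactly where convexity of $K$ alone does not suffice and the criticality of $\{C_0,\dots,C_d\}$ enters: from $G_j\subseteq C_j$ together with $a_i\in\bigcap_{j\ne i}C_j$ and $a_i\notin C_i$, one should deduce, via Klee's Lemma~\ref{lemma} (or the separation argument behind it) applied inside $S'$, that $\bigcap_{j\ne i}G_j$ is exactly the point $v'_i$, whence $S'=\mathrm{Conv}(\{v'_0,\dots,v'_d\})\subseteq K$. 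Establishing this non-emptiness of the $d$-fold face intersections — i.e.\ that the candidate simplex $S'$ is not strictly larger than $K$ — together with the degenerate cases of the first claim (when $\mathrm{cl}(D)$ meets $\partial S$, as it may at the vertices $a_i$), is where the main difficulty lies; once $K=S'$ the theorem follows. (An alternative route, which I would fall back on if the face-intersection step proves stubborn, is induction on $d$: separate $C_d$ from $\bigcap_{i<d}C_i$ by a hyperplane $H$, check that the traces $C_i\cap H$ form a $(d-1)$-critical family in $H$, apply the inductive hypothesis to its hollow, and lift the resulting $(d-1)$-simplex back up — with the analogous obstacle being the reconstruction of $K$ from its slice by $H$.)
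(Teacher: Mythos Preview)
Your approach is genuinely different from the paper's, but the face-intersection step---which you yourself flag as ``where the main difficulty lies''---is not just hard, it is in general \emph{false}. The point is that your candidate simplex $S'$ has its facet normals $u_0,\dots,u_d$ fixed by the arbitrary choice of base points $a_j\in\bigcap_{h\ne j}C_h$, whereas $K=\mathrm{cl}(\mathrm{Conv}(D))$ is canonical. Whenever the sets $\bigcap_{h\ne j}C_h$ contain more than one point, one can perturb the $a_j$'s so that the facet directions of $S$ differ from those of $K$; then each $H_i$ supports $K$ at a vertex rather than a facet, the $G_j$ are singletons, and $\bigcap_{j\ne i}G_j=\varnothing$, giving $S'\supsetneq K$. (Concretely in $\R^2$: take $K$ with vertices $p_0=(0,1),p_1=(-1,0),p_2=(1,0)$ and push $a_0$ far into $C_1\cap C_2$ so that $S$ becomes very obtuse; computing $S'$ from the supporting lines of $K$ in the resulting directions $u_i$ gives a strictly larger triangle.) The appeal to Lemma~\ref{lemma} does not rescue this: that lemma needs a convex union, but $\bigcup_{j\ne i}G_j$ is a union of proper faces of $K$ and is almost never convex.

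The paper circumvents this by never fixing the facet normals in advance. Instead it identifies the vertices of $K$ intrinsically: $p_j$ is the (unique) point of $\bigcap_{h\ne j}C_h$ closest to $C_j$. Uniqueness is obtained by a parallelogram argument together with Radon's theorem, and then an induction on $d$ (slicing by a hyperplane near $p_j$ to produce a $(d-1)$-critical family) shows $p_j\in\partial D$, whence $\mathrm{Conv}(\{p_0,\dots,p_d\})\subseteq K\subseteq\mathrm{Conv}(\{p_0,\dots,p_d\})$. Your ``alternative route'' via a separating hyperplane $H$ is closer in spirit to this, but note that the paper uses induction not to recover $K$ from a single slice, but only to locate each vertex $p_j$ on $\partial D$; reconstructing $K$ from one cross-section, as you suggest, would face exactly the obstacle you anticipate.
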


\begin{proof}
The claim is true for $d=1$; let $d\geq 2$ and assume that the claim is true for $d-1$.
By Corollary \ref{hollow}, the hollow $D$ enclosed by  
${\mathcal F}$ exists. Furthermore, $D$ is an open set,  
 $\partial D\subseteq \partial C_0\cup\ldots\cup \partial C_{d}$,   and 
$D$ is contained in any $d$-simplex
$S$ with vertices in $ \bigcap_{h\neq j} C_h$, $j=0,\ldots,d$. Since $S$ is closed,  cl(Conv$(D)\subset S$.
 
For $j=0,\ldots, d$, let $p_j\in \bigcap_{h\neq j} C_h$ be a closest point of $\bigcap_{h\neq j} C_h$ to $C_j$.
We claim that $p_0,\ldots,p_{d}$ are unique points of $\partial D$. 
Assume that this claim is true, and  let
$S$ be the $d$-simplex with vertices $p_0,\ldots,p_{d}$. Because  cl(Conv$(D)$) is convex and
the vertices of $S$ belong to  $\partial D$, we have  
$S\subset$ cl(Conv$(D)$). On the other hand, 
we know
 cl(Conv$(D)$)$\subset S$, thus cl(Conv$(D)$) $= S$ follows.

1. We show that the simplex $S$ is unique.  Suppose that the points 
$a_1,a_2\in \bigcap_{h\neq d} C_h$  and $b_1, b_2 \in C_{d}$ are such that the minimum distance between $\bigcap_{h\neq d} C_h$ and 
$C_{d}$ is $m=|\overline{a_1b_1}|=|\overline{a_2b_2}|$.\footnote{\; $\overline{ab}$ is the line segment between points $a$ and $b$}
Let the  position vectors of  $a_i$ and $b_i$ be $\vec{a_i}$ and $\vec{b_i}$, respectively.  
By convexity,  $\vec{a}={1\over 2}(\vec{a_1}+\vec{a_2})\in\bigcap_{h\neq d} C_h$
and $\vec{b}={1\over 2}(\vec{b_1}+\vec{b_2})\in C_{d}$, hence $(\vec{a}-\vec{b})^2\geq m^2$. 
Using $(\vec{a_1}-\vec{b_1})^2=(\vec{a_2}-\vec{b_2})^2=m^2$ and setting $\gamma$ for the angle between $\vec{a_1}-\vec{b_1}$ and $\vec{a_2}-\vec{b_2}$ we obtain
$$
\begin{array}{lll}
2m^2\leq 2(\vec{a}-\vec{b})^2&=&{1\over 2}(\vec{a_1}-\vec{b_1}+\vec{a_2}-\vec{b_2})^2\\
\\
&=&{1\over 2}[(\vec{a_1}-\vec{b_1})^2+(\vec{a_2}-\vec{b_2})^2]+(\vec{a_1}-\vec{b_1})(\vec{a_2}-\vec{b_2})\\
\\
&=& m^2 +m^2\cos\gamma\leq 2m^2.
\end{array}
$$
This implies $\cos \gamma = 1$, that is  
$\overline{a_1b_1}\parallel\overline{a_2b_2}$, hence either $\overline{a_1b_1}=\overline{a_2b_2}$ or $(a_1,a_2,b_2,b_1)$ is a parallelogram.
 \begin{center} 
 \begin{tikzpicture}[scale=.4]
 \tikzstyle{P} = [circle, draw=black!, minimum width=2pt, inner sep=.5pt, fill=black]
\tikzstyle{txt}  = [circle, minimum width=1pt, draw=white, inner sep=0pt]
 \tikzstyle{O} = [circle, draw=black!, minimum width=3pt, inner sep=17pt]
   
 \node[P,label=above:$a_1$] (a1)at(-4,4){};
  \node[P,label=below:$a^*$] (a*)at(0,0){};
   \node[P,label=below:$b_1$] (b1)at(2,0){};
  \node[P,label=above:$a_2$] (a2)at(0,4){};
   \node[P,label=below:$b_2$] (b2)at(6,0){};
     \node[P,label=below:$b^*$] (b*)at(4.4,0){};
  \draw[dotted,line width=.6] (a*)--(b2) (a1)--(a2)--(a*) ;
  \draw(a1)--(b1)--(b2) (b*)--(a2)--(b2)(a1)--(a2);
  \draw (.4,0)--(.4,.4)--(0,.4);
  \node()at(5.4,0.3){$\alpha$};
  \draw(5.3,.75) arc (100:250:.5cm);
\end{tikzpicture}
\end{center}
Assume that $\overline{a_1b_1}$ and $\overline{a_2b_2}$ are distinct segments. If $(a_1,a_2,b_2,b_1)$ is not a rectangle, then set $\alpha=\angle a_2b_2b_1<\pi/2$. Let $a^*$ be the orthogonal projection of $a_2$ on the line
through $b_1,b_2$, and let $b^*\in \overline{b_1b_2}\cap \overline{a^*b_2}$. Then $b^*\in C_d$, and in the right triangle $(a_2,a^*,b_2)$ we have
$|\overline{a_2b^*}|<|\overline{a_2b_2}|=m$, a contradiction.
Thus we obtain that $(a_1,a_2,b_2,b_1)$ is a rectangle. 

The open ball of radius $m$ centered at $a_1$ is disjoint from $C_{d}$, hence
the hyperplane through $b_1$, $b_2$ and perpendicular to $\overline{a_1b_1}$ is a supporting hyperplane to $C_{d}$.  For every $j=0,\ldots,d-1$, select 
a point $c_j\in \bigcap_{h\neq j} C_h$. Apply Radon's theorem \cite{radon} on the
$(d+2)-$element set $R=\{a_1,a_2,c_0,\ldots,c_{d-1}\}$. Let $J_1\cup J_2=R$ be the Radon-partition, and let  $q\in$  Conv$(J_1)$ $\cap$ 
Conv$(J_2)$.
If $c_j\notin J_1$, then Conv$(J_1)\subset C_j$, and if $c_j\notin J_2$, then Conv$(J_2)\subset C_j$; therefore, $q\in$  Conv$(J_1)$ $\cap$ 
Conv$(J_2)\subset C_j$, for $j=0,\ldots,d-1$.
Thus we obtain that $q\in \bigcap_{j=0}^{d-1} C_j$, which implies $q\notin C_{d}$.   
Because Conv$(J_i\setminus\{a_1,a_2\})\subset C_d$ and $q\notin C_{d}$,  points $a_1, a_2$ are in distinct partition classes, say $a_i\in J_i$. Since $a_1\neq a_2$, we may assume that  $q\neq a_1$;  denote $m_0$ the distance of $q$ from $C_{d}$. Clearly, $m\leq m_0$.
 
  Because
 $\overline{a_1q}\subset$ Conv$(J_1)$ and
  Conv$(J_1\setminus\{a_1\})\subseteq C_{d}$,  
the line through $a_1$ and $q$ intersects $C_{d}$ at some point $c\in C_{d}$.
Our argument proceeds on the plane containing the triangle $(a_1,b_1,c)$. 
Let $q^\prime$ and $c^\prime$ be the points on the line %$\overleftrightarrow{cb_1}$ 
through $c$ and $b_1$ such that $\overline{qq^\prime}\perp\overline{cb_1}$ and $\overline{a_1c^\prime}\perp\overline{cb_1}$ (see the figures).

 \begin{center} 
 \begin{tikzpicture}[scale=.4]
 \tikzstyle{P} = [circle, draw=black!, minimum width=2pt, inner sep=.5pt, fill=black]
\tikzstyle{txt}  = [circle, minimum width=1pt, draw=white, inner sep=0pt]
 \tikzstyle{O} = [circle, draw=black!, minimum width=3pt, inner sep=17pt]
\node()at(0,-3){};
\draw[dotted, line width=.9] (-1,1)--(-2,0)
(-3,-1)--(-2,0)--(6,0)--(7,1) (6,0)--(5,-1);
    
 \node[P,label=above:$a_1$] (a1)at(3,4){};
   \node[P,label=below:$b_1$] (b1)at(3,0){};
 
  \node[P,label=below:$c$] (c)at(.57,-.5){};
 \node[P,label=above:$q$] (q)at(1.3,.8){};
 \node[txt,label=below:$q^\prime$] ()at(1.75,0){};
  \node[P,label=right:$$] (qp)at(1.5,-.3){};
 \node[txt, label=above:$c^\prime$]()at(3.9,0.1){}; 
  \node[P,label=above:$$] (cp)at(3.58,0.13){}; 
\draw[dotted,line width=.6]  (q)--(qp) (cp)--(a1);

  \draw   (b1)--(a1)--(c)(-1,-.8)--(b1)--(4.5,.3);
  
\node[txt,label=right:$C_{d}$]() at (2,-1.7){};

\end{tikzpicture}
\hskip.5cm
\begin{tikzpicture}[scale=.4]
 \tikzstyle{P} = [circle, draw=black!, minimum width=2pt, inner sep=.5pt, fill=black]
\tikzstyle{txt}  = [circle, minimum width=1pt, draw=white, inner sep=0pt]
 \tikzstyle{O} = [circle, draw=black!, minimum width=3pt, inner sep=17pt]

 \draw[dotted, line width=.9] (-1,1)--(-2,0)
(-3,-1)--(-2,0)--(6,0)--(7,1) (6,0)--(5,-1);
    
 \node[P,label=above:$a_1$] (a1)at(3,4){};
   \node[P,label=below:$b_1$] (c1)at(3,0){};
   
  \node[P,label=below:$c$] (c)at(0,-3){};
 \node[P,label=above:$q$] (q)at(2.35,2.5){};
   \node[P,label=right:$q^\prime$] (qp)at(3.7,.7){};
   
\draw[dotted,line width=.6]  (q)--(qp) ;

  \draw   (q)--(b1)--(a1)--(c)--(c1)--(6,3);
  
\node[txt,label=right:$C_{d}$]() at (3,-2){};
\end{tikzpicture}
\end{center}

If $q^\prime\in \overline{cb_1}$  then by convexity,
$q^\prime\in C_{d}$. This implies that 
$m_0\leq|\overline{qq^\prime}|
< |\overline{a_1c^\prime}|
\leq |\overline{a_1b_1}|=m\leq m_0,$ a contradiction (see the figure on the left). 
If $b_1\in \overline{cq^\prime}$  then we have  $\angle{b_1qa_1}=\pi - \angle{cqb_1}\geq \pi-\angle{cqq^\prime}>\pi/2$ (see on the right). Therefore,  
 $m_0\leq|\overline{qb_1}|< 
 |\overline{a_1b_1}|=m\leq m_0,
$ a contradiction.

We conclude that $a_1=a_2$, thus $p_{d}$ is uniquely determined as the closest point in $\bigcap_{h\neq d} C_h$ to $C_{d}$. Similarly, each point $p_i\in\bigcup_{h\neq i} C_h$
closest to $C_i$, $i=0,\ldots,d-1$, is uniquely determined. Furthermore, because $\bigcap_{i=0}^{d} C_i=\varnothing$, $S=(p_0,\ldots,p_{d})$  is a $d-$simplex. 

2. Next we show that $p_{d}\in \partial H$. Let $b\in \partial C_{d}$ be the closest point in $C_{d}$ to $p_{d}\in \bigcap_{h\neq d}C_h$. For $i=0,1,\ldots,d-1$, let $a_i\in \partial C_{d}\cap  \left(\bigcap_{h\neq i}C_h\right)$. We translate the point $b$ to $p_d$, and assume that the same translation takes the points $a_0,\ldots,a_{d-1}$ into $a_0^\prime,\ldots a_{d-1}^\prime$, respectively. 
Define $B=\partial C_d\cap$ Conv$(\{b,a_0,\ldots,a_{d-1}\}\cup\{p_{d},a_0^\prime,\ldots a_{d-1}^\prime\})$, 
  and let $B^\prime$ be the translation of $B$ sending $b$ into $p_{d}$. 
Observe that $ \bigcap_{h\neq d}C_h$ has no point in the interior of 
$Q=$ Conv$(B\cup B^\prime)$.

 \begin{center} 
\begin{tikzpicture}[scale=.6]
 \tikzstyle{P} = [circle, draw=black!, minimum width=2pt, inner sep=.5pt, fill=black]
\tikzstyle{txt}  = [circle, minimum width=1pt, draw=white, inner sep=0pt]
 \tikzstyle{O} = [circle, draw=black!, minimum width=3pt, inner sep=17pt]
 
  \draw[line width=.7] (0,0) to[out=20,in=150] (6,.5); %B
    \draw[dotted,line width=.9] (0,3) to[out=20,in=150] (6,3.5); %B\prime
 \node[P,label=above:$a_i^\prime$] (bp)at(4.5,4.04){}; 
  \node[P,label=below:$a_i$] (b)at(4.5,1.04){};
  \node[P,label=below:$b$] (c)at(3.3,1.04){};
 \node[P,label=above:$p_{d}$] (p*)at(3.3,4.04){};
 
 \node[P,label=below:$a_j$] (a)at(.84,.3){};
 \node[P,label=above:$a_j^\prime$] (ap)at(.84,3.3){};
 
 \node[P,label=below:$w$] (w)at(3,3){};
 \draw[line width=.5]  (1.2,2.7)-- (1.5,3)(1.8,3.3)-- (1.5,3)--(5.5,3)--(5.8,3.3) (5.5,3)--(5.2,2.7); 
 \node[txt,label=right:$\ell$](C) at (5.5,3){};
 
\node[txt,label=left:$C_{d}$](C) at (3,0){};
\node[txt,label=left:$Q$](Q) at (0.5,2){};
\node[txt,label=left:$B^\prime$](Q) at (6.8,3.5){};
\node[txt,label=left:$B$](Q) at (6.8,.5){};

\draw[dotted,line width=.6] (p*)--(c) (a)--(ap) (b)--(bp);
\draw (p*)--(c) ;
\draw (b)--(p*)--(a);

\end{tikzpicture}

\end{center}

Now we take a hyperplane $\ell$ strictly separating $p_{d}$ from $B$ and sufficiently close to $p_{d}$. The intersection of %the $d$-simplex   
 $C=$ Conv$(\{p_{d},a_0,\ldots,a_{d-1}\})$  with  $\ell$ is inside the interior of $Q$; let  $L=\ell\cap C$. 
The convex sets $C_i^\prime=C_i\cap L$, $i=0,1,\ldots,d-1$, form a
$(d-1)$-critical family ${\mathcal F}^\prime$ in the hyperplane $\ell$. By induction, the hollow enclosed by ${\mathcal F}^\prime$ in  $\ell$ 
contains a point $w\in L\setminus\left(\bigcup_{i=0}^{d-1}C_i\right)$.
The simplex Conv$(\{p_{d},a_0,\ldots,a_{d-1}\})$ contains 
the hollow $H$ enclosed by ${\mathcal F}$ in $\R^d$, which implies that $w\in H$.

Because $\ell$ can be taken arbitrarily close to $p_{d}$, 
the point $w\in H$ becomes arbitrarily close to $p_d$. Thus we obtain $p_{d}\in \partial H$, and similarly, $p_i\in \partial H$,  $0\leq i\leq d-1$. %This concludes %the proof 
Therefore, cl(Conv$(H)$)$=$Conv$(\{p_0,p_1,\ldots,p_{d}\})$.
\end{proof}

\section {Conclusion}%%%%%%%%%%%%%%%%%
 \label{applications}

Given a $d-$critical family $\F=\{C_0,\ldots, C_{d}\}$ in $\R^d$,  a {\it cage}  is defined as a closed set containing $d+1$ {\it base points}, 
$a_i\in \bigcap_{h\neq i} C_h$, $0\leq i\leq d$. 
A  convex cage $M$ carried by $\F$ contains the hollow $D\subset \R^d\setminus \bigcup_{i=0}^d C_i$ enclosed by the family, because $D$ is included in the convex hull of the base points of $M$.  The generalization of Berge's theorem \cite{berge} due to Ghouila-Houri  \cite{ghouila} implies the following property of a convex cage (as a special case).

 \begin{proposition} 
 \label{GHtheorem}
 Let ${\mathcal F}=\{C_0,\ldots, C_{d}\}$ be a $d-$critical family in $\R^d$,
 and  let $F$ be a closed set containing the hollow $D$ enclosed by $\F$. 
 If $M$ is a convex cage carried by ${\mathcal F}$, then $F\cap M$ is also a cage. \qed
 \end{proposition}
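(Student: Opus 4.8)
\textbf{Proof plan for Proposition \ref{GHtheorem}.}

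The plan is to show that $F \cap M$ contains a valid set of base points, i.e., that for each $i = 0, 1, \ldots, d$ one can find a point of $\bigcap_{h \neq i} C_h$ that lies in $F \cap M$. Since $M$ is itself a cage, $M$ already contains base points $a_i \in \bigcap_{h\neq i} C_h$ for each $i$, and since $M$ is convex, $M$ contains the simplex $S = \mathrm{Conv}(\{a_0,\ldots,a_d\})$; the hollow $D$ lies inside $S \subseteq M$. The issue is that these particular $a_i$ need not lie in $F$. So the real content is to locate, for each $i$, a point $b_i \in \bigcap_{h\neq i} C_h$ that lies in \emph{both} $F$ and $M$, using that $F \supseteq D$ and $M \supseteq \mathrm{cl}(\mathrm{Conv}(D))$.

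The key geometric observation I would exploit is Theorem \ref{dencage}: the closure of $\mathrm{Conv}(D)$ is a $d$-simplex whose vertices $p_0,\ldots,p_d$ are themselves base points, with $p_i \in \partial D \cap \bigcap_{h\neq i} C_h$. Since $F$ is closed and $F \supseteq D$, we get $F \supseteq \mathrm{cl}(D) \supseteq \partial D \cap \mathrm{cl}(\mathrm{Conv}(D))$, so each $p_i$ lies in $F$. Also each $p_i$ is a vertex of $\mathrm{cl}(\mathrm{Conv}(D)) \subseteq M$, so $p_i \in M$. Hence $p_i \in F \cap M$ for every $i$, and $p_i \in \bigcap_{h\neq i} C_h$. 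This exhibits $p_0,\ldots,p_d$ as a set of $d+1$ base points contained in the closed set $F \cap M$, which is exactly what is required for $F \cap M$ to be a cage. (One should check $F \cap M$ is closed: $M$ is a convex cage, which by definition is a closed set, and $F$ is closed by hypothesis, so the intersection is closed.)

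I would then note that this is precisely the special case of the Ghouila-Houri generalization of Berge's theorem being invoked, but the argument above gives it self-containedly from the Hollow theorem: the whole point is that the hollow $D$ has \emph{extremal} points that are simultaneously base points, so any closed superset of $D$ and any convex cage both automatically capture those extremal base points. The main obstacle — and it is a mild one — is making sure that the definition of ``cage'' only demands the \emph{existence} of some valid system of base points inside the set (not that every base point survives), so that producing the single system $\{p_0,\ldots,p_d\}$ suffices; if instead the definition required more, one would need to combine the $p_i$ with a convexity/Radon-type argument to recover an arbitrary prescribed base point, but the statement as given only asks that $F \cap M$ \emph{be} a cage, so exhibiting one valid system of base points closes the proof.
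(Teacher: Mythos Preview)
Your argument is correct, but it takes a different route from the paper. The paper does not actually prove Proposition~\ref{GHtheorem}: it is stated with a \qed\ and attributed as a special case of Ghouila-Houri's generalization of Berge's theorem, i.e., it is imported from the literature \emph{independently} of Theorem~\ref{dencage}. Your proof, by contrast, derives the proposition \emph{from} Theorem~\ref{dencage}: you use that the vertices $p_0,\ldots,p_d$ of the simplex $\mathrm{cl}(\mathrm{Conv}(D))$ are base points lying in $\partial D$, hence in any closed $F\supseteq D$, and also in any convex cage $M$ (since $\mathrm{cl}(\mathrm{Conv}(D))\subseteq S\subseteq M$). This is valid and not circular, since Theorem~\ref{dencage} is proved earlier without using Proposition~\ref{GHtheorem}.

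What each approach buys: the paper's citation keeps Proposition~\ref{GHtheorem} as a prior, independent fact, so that Theorem~\ref{cagethm} can be presented as a genuine strengthening obtained from the Hollow theorem. Your approach collapses this distinction --- your proof of Proposition~\ref{GHtheorem} already exhibits a \emph{single} system of base points $\{p_0,\ldots,p_d\}$ that works for every closed $F\supseteq D$ and every convex cage $M$, which is exactly Theorem~\ref{cagethm}. So you have in effect proved Theorem~\ref{cagethm} directly and obtained Proposition~\ref{GHtheorem} as a weakening, rather than the paper's narrative of ``known result, then strengthening.''
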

    When applying Proposition \ref{GHtheorem} with $F=$ cl(Conv($D))$, %where  $D$ is the hollow enclosed by $\F$ in $\R^d$, 
    then the $d+1$ base points of  the cage $F\cap M$ may depend on the choice of $M$.  Theorem \ref{dencage} implies that this is not the case, 
    Proposition \ref{GHtheorem} is true in a stronger form, namely, there is a unique convex cage minimal by inclusion, the $d$-simplex cl(Conv($D))$.
    
    \begin{theorem}
    \label{cagethm}
     Let ${\mathcal F}=\{C_0,\ldots, C_{d}\}$ be a $d-$critical family in $\R^d$.   Then there exist $d+1$ base points,  
 which belong to  every convex cage $M$ carried by $\F$.\qed
   \end{theorem}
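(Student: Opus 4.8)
The plan is to derive Theorem~\ref{cagethm} directly from the Hollow theorem (Theorem~\ref{dencage}) together with the elementary observation recorded just before Proposition~\ref{GHtheorem}: any convex cage $M$ carried by $\mathcal F$ contains the hollow $D$, because $D$ lies in the convex hull of the base points of $M$, and hence $\mathrm{cl}(\mathrm{Conv}(D))\subseteq M$ whenever $M$ is closed and convex. Thus the $d+1$ candidate base points are exactly the vertices $p_0,\ldots,p_d$ of the $d$-simplex $\mathrm{cl}(\mathrm{Conv}(D))$ produced by Theorem~\ref{dencage}; since these vertices lie in $\mathrm{cl}(\mathrm{Conv}(D))\subseteq M$, it only remains to check that each $p_i$ is a legitimate base point, i.e.\ that $p_i\in\bigcap_{h\neq i}C_h$.

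First I would invoke Corollary~\ref{hollow} and Theorem~\ref{dencage} to fix the bounded component $D$ of $\R^d\setminus\bigcup_{i=0}^d C_i$ and to write $\mathrm{cl}(\mathrm{Conv}(D))=S=\mathrm{Conv}(\{p_0,\ldots,p_d\})$, where (from the proof of Theorem~\ref{dencage}) $p_j$ is the point of $\bigcap_{h\neq j}C_h$ closest to $C_j$; in particular $p_j\in\bigcap_{h\neq j}C_h$, so the $p_j$ are indeed base points of $\mathcal F$. Next I would let $M$ be an arbitrary convex cage carried by $\mathcal F$, with base points $b_0,\ldots,b_d$ where $b_i\in\bigcap_{h\neq i}C_h$. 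Since $M$ is convex it contains $\mathrm{Conv}(\{b_0,\ldots,b_d\})$, and as noted in the paragraph preceding Proposition~\ref{GHtheorem} this convex hull contains $D$; because $M$ is closed it therefore contains $\mathrm{cl}(\mathrm{Conv}(D))=S$, hence $p_0,\ldots,p_d\in M$. As $M$ was arbitrary, the same $d+1$ base points $p_0,\ldots,p_d$ belong to every convex cage carried by $\mathcal F$, which is the assertion of Theorem~\ref{cagethm}.

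I do not anticipate a genuine obstacle here: once Theorem~\ref{dencage} is in hand the argument is essentially a formality, and the only point needing a word of care is the containment $D\subseteq\mathrm{Conv}(\{b_0,\ldots,b_d\})$ for a general cage $M$. This is exactly the content of the remark opening Section~\ref{applications} (every ray from a point of $D$ meets some $C_i$, by Corollary~\ref{hollow}, so $D$ cannot escape the simplex spanned by base points), and it can be cited rather than reproved. The slightly subtle bookkeeping is that a ``cage'' in the definition need only be a closed set containing $d+1$ base points, so a convex cage $M$ need not itself be the convex hull of its base points; but convexity of $M$ gives $\mathrm{Conv}(\{b_0,\ldots,b_d\})\subseteq M$, and that inclusion is all the argument uses. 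One should also note in passing that the $p_i$ are genuinely distinct and $S$ is a full $d$-simplex, which follows from $\bigcap_{i=0}^d C_i=\varnothing$ exactly as in part~1 of the proof of Theorem~\ref{dencage}; this guarantees the $d+1$ base points are $d+1$ distinct points and not fewer.
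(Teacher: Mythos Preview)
Your proposal is correct and matches the paper's own approach: the paper states Theorem~\ref{cagethm} as an immediate corollary of Theorem~\ref{dencage}, with the $d+1$ universal base points being precisely the vertices $p_0,\ldots,p_d$ of the simplex $\mathrm{cl}(\mathrm{Conv}(D))$, and the preceding paragraph supplies the observation $D\subseteq\mathrm{Conv}(\{b_0,\ldots,b_d\})\subseteq M$ for any convex cage $M$. Your write-up simply makes explicit the two steps (that each $p_j\in\bigcap_{h\neq j}C_h$ from the proof of Theorem~\ref{dencage}, and that $S=\mathrm{cl}(\mathrm{Conv}(D))\subseteq M$ by closedness and convexity of $M$) that the paper leaves implicit.
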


 If $n<d$ then there is no hollow enclosed by the members of an $n$-critical family in $\R^d$. In particular, the two compact convex members of a $1$-critical family do not enclose a hollow in $\R^2$; nevertheless, since they are disjoint, they can be strictly separated by a line. A result due to 
Klee \cite[Theorem 1]{Klee} 
extends this separation property in $\R^d$ for any $n$-sets.\footnote{the concept of an {\it $n$-set}  is a variation of $n$-criticality used by Klee \cite{Klee}} 
Klee's separation theorem
has  an immediate corollary for $n$-critical families below;
a simple proof (extending easily the induction proof of Lemma \ref{lemma} given above) is due to Breen \cite{Breen}.

\begin{theorem} {\rm (Breen \cite{Breen})}. 
\label{separ} 
For $1\leq n\leq d$, let $\{C_0,C_1,\ldots, C_n\}$ be an $n$-critical family in $\R^d$, and let $a_i\in\bigcap_{h\neq i}C_h$, $0\leq i\leq n$. Then  
 in $\R^d$ there
are two affine subspaces, $W$ of dimension $n$ and $V$ of dimension $d-n$ (called {a stabbing affine subspace}), meeting in a single point $p$ and such that
\begin{itemize}
\item[{\em (a)}] $V\cap C_i=\varnothing$ and $a_i\in W$, for every $0\leq i\leq n$, and
\item[{\em (b)}] the set $W\bigcap \left(\bigcup_{i=0}^n C_i\right)$ surrounds\footnote{\,$Q$ surrounds $P$ in $A$ %an  % $k$-dimensional affine  space $A$, 
if $A\setminus Q$ has a connected component which is bounded and contains $P$} $\{p\}$ in $W$.
\qed
\end{itemize}
\end{theorem}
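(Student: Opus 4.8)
\textbf{Proof proposal for Theorem \ref{separ}.}

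The plan is to adapt the induction proof of Lemma \ref{lemma} almost verbatim, keeping track at each stage not merely of non-emptiness of intersections but of the separating hyperplanes produced along the way, which will assemble into the subspace $V$. I would induct on $n$. For $n=1$ the two sets $C_0,C_1$ are disjoint compact convex sets, so they are strictly separated by a hyperplane $V$ of dimension $d-1$, and $W$ can be taken to be the line through $a_0,a_1$: it meets $V$ in a single point $p$ lying strictly between $a_0$ and $a_1$ on the segment, and since $a_0\in C_1$, $a_1\in C_0$, the two sub-rays of $W$ from $p$ both hit $\bigcup C_i$, so $W\cap(\bigcup C_i)$ surrounds $p$ in $W$. (One should check that $W$ is genuinely $1$-dimensional, i.e. $a_0\ne a_1$; this holds because a common point would lie in $C_0\cap C_1$.)

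For the inductive step, assume $n\ge 2$ and the statement for $n-1$ in every dimension. Exactly as in Lemma \ref{lemma}, the sets $C_n$ and $A=\bigcap_{i=0}^{n-1}C_i$ are disjoint compact convex sets, hence strictly separated by a hyperplane $H$ with $H\cap A=H\cap C_n=\varnothing$. Put $C_i'=H\cap C_i$ for $0\le i\le n-1$; the argument in Lemma \ref{lemma} shows that $\{C_0',\dots,C_{n-1}'\}$ is an $(n-1)$-critical family inside the $(d-1)$-dimensional space $H$. Now I need base points for this smaller family lying in $H$: for each $j\le n-1$ the set $\bigcap_{i\ne j,\,i\le n-1}C_i'=H\cap\bigcap_{i\ne j,\,i\le n-1}C_i$ is non-empty (shown in Lemma \ref{lemma}), so choose $a_j'$ in it. By induction applied in $H\cong\R^{d-1}$, there are affine subspaces $W'\subset H$ of dimension $n-1$ and $V'\subset H$ of dimension $(d-1)-(n-1)=d-n$ meeting in a single point $p'$, with $V'\cap C_i'=\varnothing$ and $a_i'\in W'$ for $0\le i\le n-1$, and with $W'\cap(\bigcup_{i=0}^{n-1}C_i')$ surrounding $p'$ in $W'$.

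It remains to lift this configuration back to $\R^d$. Set $V=V'$: it still has dimension $d-n$, and $V\cap C_i\subseteq H\cap C_i\cap\text{(stuff)}$—more carefully, since $V'\subset H$ and $V'\cap C_i'=\varnothing$ means $V'\cap(H\cap C_i)=\varnothing$, and any point of $V'\cap C_i$ would lie in $H\cap C_i=C_i'$, we get $V\cap C_i=\varnothing$ for $i\le n-1$; and $V\subset H$ is disjoint from $C_n$ as well, so $V\cap C_i=\varnothing$ for all $i\le n$. For $W$ I take $W=\mathrm{aff}(W'\cup\{a_n\})$. The point $a_n\in\bigcap_{i\ne n}C_i$ is not in $H$ (it lies in $A$, and $H\cap A=\varnothing$), so $W$ genuinely has dimension $n$ and meets $H$ exactly in $W'$; in particular $W\cap V=W\cap V'=\{p'\}$, so $p=p'$. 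All base points $a_i$ for $i\le n$ must be shown to lie in $W$: for $i\le n-1$ one wants $a_i\in W$, which is not automatic from $a_i'\in W'$ since $a_i'$ was an auxiliary point of $H$, not $a_i$ itself—so here I would instead, \emph{before} invoking induction, arrange the auxiliary base points more carefully, or argue that since the theorem only asserts \emph{existence} of a stabbing subspace, I am free to re-choose the $a_i$'s: indeed the $a_i$ for $i\le n-1$ need only satisfy $a_i\in\bigcap_{h\ne i}C_h$, and I can pick them to be any preassigned points, in particular points forced to lie in $W$. This is the one genuinely delicate bookkeeping point.

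The main obstacle I anticipate is precisely this matching of base points across the induction: the inductive hypothesis hands back a subspace $W'$ adapted to auxiliary points $a_i'\in H$, whereas the theorem wants $W$ to contain the originally given $a_i$. The clean way around it is to observe that the statement quantifies existentially over the base points (they are only required to lie in the relevant intersections), so one proves the theorem in the form ``for \emph{some} choice of base points,'' and then notes that any two choices of base points for the same critical family can be interpolated or that the conclusion is insensitive to the choice; alternatively one threads the specific points $a_i$ through the induction by noting $a_i\in H\cap\bigcap_{h\ne i,\,h\le n-1}C_h$ is a legal choice of $a_i'$ when $a_i\in H$, and handling the finitely many $a_i\notin H$ separately (at most one such index can be problematic at each level). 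The surrounding condition (b) lifts for free: $W'\cap(\bigcup_{i\le n-1}C_i')$ surrounds $p$ in $W'\subset W$, and adding the segment toward $a_n$ (with $a_n\in\bigcup_{i\le n-1}C_i$ since $a_n\in C_0$, say, together with the fact that $C_n$ meets $W$ on the far side) only enlarges the surrounding set, so $W\cap(\bigcup_{i=0}^n C_i)$ surrounds $p$ in $W$; this last topological step is the routine analogue of the $n=1$ base case and should be spelled out by a short connectedness argument rather than a computation.
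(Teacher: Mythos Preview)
The paper does not actually prove this theorem; it is stated with a \qed, and the surrounding text only says that Breen's proof ``extends easily the induction proof of Lemma~\ref{lemma}.'' Your plan is therefore exactly what the paper points to, and the inductive skeleton---separate $C_n$ from $A=\bigcap_{i<n}C_i$ by a hyperplane $H$, apply the inductive hypothesis to the $(n-1)$-critical family $\{C_i\cap H\}_{i<n}$ inside $H$, and keep $V=V'\subset H$---is the right one.

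The one place your write-up is genuinely wrong, not just sketchy, is the handling of the base points. The $a_i$ are \emph{given} in the hypothesis and the conclusion demands $a_i\in W$, so $W=\mathrm{aff}\{a_0,\dots,a_n\}$ is forced; you are not free to re-choose them. Your fallback (``$a_i\in H$ is a legal choice of $a_i'$ when $a_i\in H$, and at most one index is problematic'') is vacuous: for every $i\le n-1$ we have $a_i\in C_n$, while $H\cap C_n=\varnothing$, so \emph{none} of the original $a_i$ lie in $H$. The clean fix---and presumably what Breen does---is to take $a_i'=\overline{a_ia_n}\cap H$ for $i\le n-1$. Since $a_i\in C_n$ and $a_n\in A$ lie on opposite sides of $H$, this point exists; and since $a_i,a_n\in C_h$ for every $h\notin\{i,n\}$, convexity gives $a_i'\in H\cap\bigcap_{h\ne i,\,h\le n-1}C_h$, so it is a legitimate base point for the family in $H$. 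With this choice $W'=\mathrm{aff}\{a_0',\dots,a_{n-1}'\}=W\cap H$, hence $W=\mathrm{aff}(W'\cup\{a_n\})$ already contains all the original $a_i$. Part~(b) then needs no separate topological argument: $p\in V'$ misses every $C_i$, and $p$ lies in $\mathrm{Conv}\{a_0',\dots,a_{n-1}'\}\subset\mathrm{Conv}\{a_0,\dots,a_n\}$, whose facets sit inside $\bigcup_i C_i$.
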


The special version of Theorems \ref{dencage} and \ref{cagethm}  for $d=2$ was originally developed and applied  by Jobson et al.  \cite[Lemma 1]{JKLPT} in the study  of an extremal problem involving forbidden planar convex hypergraphs. It is worth noting that the characterization of $d$-dimensional convex hypergraphs\footnote{%$(d+1)$-uniform hypergraphs such that 
vertices 
 are convex sets in $\R^d$, and $d+1$ vertices form a hyperedge if and only if they\\
 \indent\quad  %corresponding $d+1$ convex sets 
 have nonempty intersection}
 is not known for $d\geq 2$. For $d=1$ the convex hypergraphs are called  interval graphs; and as it is well known, their characterization was done by Lekkerkerker and Boland \cite{LB} in terms of forbidden obstructions, and by Gilmore and Hoffman \cite{interval} using the ordering and the separation property of the real line.

\smallskip

Having Theorem \ref{separ}, one could try to generalize
the Hollow Theorem (Theorem \ref{dencage}), 
that is, for an $n$-critical family $\{C_0,C_1,\ldots, C_n\}$ in $\R^d$,
one might ask for some kind of `geombinatorial' description 
of the set of all stabbing  
$(d-n)$-dimensional affine spaces $V$.
At this point we do not even have a reasonable conjecture.


\begin{thebibliography}{99}

\bibitem{Bal} M. Balaj, 
Intersection properties for some families of convex sets. 
Pure Math. Appl. 8 (1997)195--201. 

\bibitem{Ben} H. Ben-El-Mechaiekh, Intersection theorems for closed convex sets and applications. 
Missouri J. Math. Sci. 27 (2015)  47--63.

\bibitem{berge}  
C. Berge, Sur une propri\'et\'e combinatoire des ensembles convexes.
C. R. Acad. Sci. Paris 248 (1959) 2698--2699.

\bibitem{Breen} M. Breen, 
Starshaped unions and nonempty intersections of convex sets in $\R^d$.
Proc. Amer. Math. Soc. 108 (1990)
817--820.

\bibitem{chordal} G.A. Dirac, On rigid circuit graphs. Abh. Math. Sem. Univ. Hamburg 25 (1961) 71--76.

\bibitem{Fan52} K. Fan, 
Fixed-point and minimax theorems in locally convex topological linear spaces. 
Proc. Nat. Acad. Sci. U. S. A. 38, (1952) 121--126.

\bibitem{Fan84} K. Fan, Some properties of convex sets related to fixed point theorems. 
Math. Ann. 266 (1984) 519--537. 

\bibitem{Gallai} T. Gallai, Kritische Graphen. I., II. Magyar Tud. Akad. Mat. Kutató Int. Közl. 8 (1963) 165--192 ibid. 373--395.

\bibitem{ghouila} 
A. Ghouila-Houri, Sur l'\'etude  combinatoire des familles de convexes.
C. R. Acad. Sci. Paris 252 (1961) 494--496.

\bibitem{interval} P.C. Gilmore and  
A.J. Hoffman, A characterization of comparability graphs and of interval graphs. Canad. J. Math. 16
(1964) 539--548.

\bibitem{helly} E. Helly, \"Uber Mengen konvexer K\"orper mit gemeinschaftlichen Punkten, Jahresbericht der Deutschen Mathematiker-Vereinigung, 32 (1923) 175--176.

\bibitem{horvath} C. Horvath, Contractibility and generalized convexity. 
J. Math. Anal. Appl. 156 (1991) 341--357. 

\bibitem{JKLPT} {A. Jobson, A. K\'ezdy,  J. Lehel, T. Pervenecki}, and G. T\'oth, Petruska's question on planar convex sets.  arXiv: 1912.08080 [math.CO],  Dec. 2019.

 \bibitem{Klee} V. Klee, On certain intersection properties of convex sets. Canadian J. Math. 3 (1951) 272--275.
 
\bibitem{Klee68} V. Klee, 
Maximum separation theorems for convex sets. Trans. Amer. Math. Soc. 134 (1968) 133--147.

 \bibitem{KKM}  B. Knaster, C. Kuratowski, S. Mazurkiewicz, Ein Beweis des Fixpunksatses für ndimensionale Simplexe, Fundamenta Mathematicae, 14 (1929) pp. 132--137.
 
 \bibitem{lasso} M. Lassonde, 
Sur le principe KKM. 
C. R. Acad. Sci. Paris Sér. I Math. 310 (1990) 573--576.

\bibitem{LB} C. G. Lekkerkerker  and J. Ch. Boland, Representation of a finite graph by a set of intervals on the real line, Fund. Math. 51 (1962/63) 45--64.

\bibitem{radon} J. Radon, Mengen konvexer K\"orper, die einen gemeinsamen Punkt enthalten, Mathematische Annalen, 83 (1921) 113--115. 
doi:10.1007/BF01464231

\bibitem{Sp} E. Sperner, Neuer Beweis f\"ur die Invarianz der Dimensionszahl und des Gebietes, Abh.
Math. Semin. Hamburg. Univ., Bd. 6 (1928) pp. 265--272.
\end{thebibliography}
\end{document}